
\documentclass[reqno,a4paper]{amsart}
\usepackage[utf8]{inputenc}
\usepackage[T1]{fontenc}
\usepackage[british]{babel}
\usepackage{amsmath,amsthm,amssymb}
\usepackage[foot]{amsaddr}

\usepackage{mathrsfs}

\usepackage{graphicx}
\usepackage{wrapfig}
\usepackage{subfig}
\usepackage{enumitem}
\usepackage[usenames,dvipsnames]{color}
\usepackage{dsfont}
\usepackage{microtype}

\usepackage{nomencl}

\usepackage{etoolbox}

\usepackage{mathtools}
\mathtoolsset{showonlyrefs,showmanualtags}

\usepackage{pstricks,pst-plot,pst-math} 
\usepackage{pstricks-add} 



\usepackage{esint}

\newtheorem{thm}{Theorem}[section]

\newtheorem{lem}[thm]{Lemma}
\newtheorem{prop}[thm]{Proposition}

\theoremstyle{definition}

\theoremstyle{remark}
\newtheorem{rem}[thm]{Remark}

\numberwithin{equation}{section}


\newcommand{\DeclareAutoPairedDelimiter}[3]{%
	\expandafter\DeclarePairedDelimiter\csname Auto\string#1\endcsname{#2}{#3}%
	\begingroup\edef\x{\endgroup
		\noexpand\DeclareRobustCommand{\noexpand#1}{%
			\expandafter\noexpand\csname Auto\string#1\endcsname*}}%
	\x}


\DeclareAutoPairedDelimiter{\abs}{\lvert}{\rvert}
\DeclareAutoPairedDelimiter{\norm}{\lVert}{\rVert}
\DeclareAutoPairedDelimiter{\bra}{(}{ )}
\DeclareAutoPairedDelimiter{\pra}{[}{]}
\DeclareAutoPairedDelimiter{\set}{\{}{\}}
\DeclareAutoPairedDelimiter{\skp}{\langle}{\rangle}

\DeclareMathAlphabet{\mathup}{OT1}{\familydefault}{m}{n}
\newcommand{\dx}[1]{\mathop{}\!\mathup{d} #1}



\DeclareMathOperator*{\supp}{supp}


\newcommand{\N}{\mathds{N}}
\newcommand{\R}{\mathds{R}}



\usepackage[colorinlistoftodos,prependcaption,textsize=tiny]{todonotes}


\newtoggle{Nebenrechnungen}
\togglefalse{Nebenrechnungen}

\newtoggle{draft}
\togglefalse{draft}


\usepackage{hyperref}
\definecolor{darkblue}{rgb}{0,0,0.6}
\hypersetup{
    pdftitle={Front blocking in the presence of gradient drift},    
    pdfauthor={Simon Eberle},
    colorlinks=true,       
    linkcolor=darkblue,          
    citecolor=darkblue,        
    filecolor=darkblue,      
    urlcolor=darkblue           
}

\pagestyle{headings}

\setcounter{tocdepth}{3}

\author{Simon Eberle$^1$}
\address{$^1$Faculty for mathematics, University of Duisburg-Essen.}

\email{simon.eberle@uni-due.de}

\title[]{Front blocking in the presence of gradient drift}

\let\rho\varrho
\let\epsilon\varepsilon

\begin{document}

\begin{abstract}
\noindent
In this paper we derive quantitative conditions under which a compactly supported drift term blocks the propagation of a traveling wave in a straight cylinder in dimension $n \geq 3$ under the condition that the drift has a potential.

\end{abstract}
	\maketitle

\section{Introduction}

This paper is an extension of paper \cite{front_blocking_vs_propagation_drift} where the author has given an explicit a-priori condition for blocking of traveling waves in a cylindrical domain subject to (compactly supported) drift to occur in \emph{ one spatial dimension}. In this article we are going to discuss the case $n \geq 3$. The main assumption we make on the drift term is that has a potential.

To the best knowledge of the author there are no quantitative results on that matter available apart from \cite{front_blocking_vs_propagation_drift}. So we hope to contribute to the understanding of blocking of traveling waves.

The investigation of traveling waves in cylinders, also subject to drift, has been done in depth in the seminal paper \cite{traveling_fronts_in_cylinders}. However the drift term has been required to be independent of the direction of propagation, in order to allow for classical traveling waves. Since then the notion of traveling waves has been broadened to more general media, i.e. pulsating fronts for periodic media \cite{BerestyckiHamelPeriodicExcitableMedia} and the very general transition fronts for very general media \cite{BerestyckiHamelGeneralizedTransitionFronts}.
In recent years there have been investigations of existence and non existence of transition fronts in outer domains with a compactly supported obstacle \cite{MatanoObst}, in cylinders with varying nonlinearity \cite{Zlatos,change_of_speed_1} and, with respect to this work especially interesting, in opening or closing cylinders \cite{front_blocking,suden_opening,change_of_speed2}.

The subject of this paper are entire solutions of the generalized initial value problem

	\begin{align} \label{DiffEqu}
	\begin{cases}
	\partial_t u - \Delta u +k(x) \cdot \nabla u = f(u) &\text{ in } D, \\
	\frac{\partial u}{\partial \nu} = 0 &\text{ on } \partial D, \\
	u(t,x) - \phi(x_1+ct) \rightarrow 0 \text{ as } t \rightarrow -\infty &\text{ uniformly in } D,
	\end{cases}
	\end{align}
	where $D:= \R \times \Omega$, $\Omega \subset \R^{n-1}$ a smooth domain, $k \in C_c(\bar{D},\R^ n)$, $\supp k \subset [-x_0,0] \times \bar{\Omega}$, $x_0>0$ and $f$ is a bistable nonlinearity (for details see section \ref{section:notation_assumption}). 
Furthermore the major assumption will be that $k$ has a \emph{potential}, i.e. there is $H \in C^1(D)$ s.t. 
\begin{align}
	\nabla H = k \quad \text{ in } D.
\end{align}

	\iftoggle{draft}{%
}
{
	\begin{figure}[!h]
		\psset{xunit=1cm,yunit=1cm}
		\frame{
			\begin{pspicture*}
			(-5.2,-1.2)(7.1,2.3)
			\psframe[fillstyle=vlines,hatchsep=0.2,hatchangle=120](2,2)
			\psline(-5,2)(7,2)
			\psline(-5,0)(7,0)
			\psline(0,0)(0,-0)
			\psline(2,0)(2,-0)
			\rput[lb](0.5,-0.5){ \boldmath$ k(x)$}
			\rput[lb](-0.4,-0.8){$-x_0$}
			\rput[lb](1.9,-0.8){$0$}
			\end{pspicture*}
		}
		\caption{Infinite cylinder with transition zone}
	\end{figure}

}



We are able to give an explicit criterion for blocking involving the net drift and some term that takes into account the concentrations of $k$ as formulated in the following theorem.
\begin{thm} \label{theorem:blocking}
Let $n \geq 3$. Then there is a constant $C(f,n)>0$ only depending on $f$ and $n$ and constants $C_i(\partial \Omega,f,n)$ only depending on the Lipschitz norm of $\partial \Omega$, $f$ and $n$ (all of them explicit) such that if  
	\begin{align} \label{blocking_condition}
	\begin{split}
	C_1(n,f)  &>\exp \bra { - \int \limits_{-x_0}^{0} \fint \limits_\Omega k_1(x_1,y) \dx{x_1} \dx{y}       }
	\max \left \{ C_2(n,f,\partial \Omega),  C_3(n,f,\partial \Omega) \right . \\
& \left .  \norm { \exp \bra { - \int \limits_{-x_0}^{x_1} k_1(\zeta,y)  \dx{\zeta}    }  }^{n-1}_{L^\infty(D_{-x_0}^0)}
	  \bra {  \int \limits_{D_{-x_0}^0} \exp \bra { \int \limits_{-x_0}^{x_1} k_1(\zeta,y) \dx{\zeta}      }^{n+1} \dx{x_1} \dx{y}    }^\frac{n}{n+1}           \right    \} 
	  \end{split}
	\end{align}
	holds, the unique solution of \eqref{DiffEqu} is blocked to the left, i.e. there exists a stationary supersolution $w: D \rightarrow \R$ of \eqref{DiffEqu} such that
	\begin{align}
	u(t,x) \leq w(x) \quad \text{ for all } t \in \R, x \in D 
	\end{align}
	and $w(x) \rightarrow 0$ as $x_1 \rightarrow -\infty$.
\end{thm}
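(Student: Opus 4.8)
The strategy is to reduce the blocking statement to the construction of a single stationary supersolution and then to build that supersolution explicitly, arranging matters so that condition \eqref{blocking_condition} is exactly the inequality that makes the construction close. For the reduction I would use comparison: since \eqref{DiffEqu} is well posed with a unique entire solution, and since $u(t,\cdot)\to\phi(x_1+ct)$ uniformly as $t\to-\infty$, it suffices to exhibit a stationary supersolution $w$ with $\partial_\nu w\ge 0$ on $\partial D$ and $w\to 0$ as $x_1\to-\infty$ that dominates the incoming profile far in the past. Because the front sits at $x_1=-ct\to+\infty$ as $t\to-\infty$, the solution is already close to $0$ on the slab and to its left, and choosing the left decay rate of $w$ no larger than the front's leading–edge rate guarantees $u(t_0,\cdot)\le w$ for a sufficiently negative $t_0$; the parabolic comparison principle then propagates $u(t,\cdot)\le w$ to all $t$, and $w(x)\to 0$ as $x_1\to-\infty$ yields blocking to the left.

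The key structural observation is that the potential turns the drift operator into a weighted Laplacian: since $k=\nabla H$,
\[
-\Delta w + k\cdot\nabla w \;=\; -\,e^{H}\operatorname{div}\!\bra{e^{-H}\nabla w},
\]
so that $w$ is a stationary supersolution exactly when $-\operatorname{div}(e^{-H}\nabla w)\ge e^{-H}f(w)$. Writing $H(x_1,y)-H(-x_0,y)=\int_{-x_0}^{x_1}k_1(\zeta,y)\dx{\zeta}$ shows that the $x_1$-integrating factor of this weighted operator is precisely $\rho(x_1,y):=\exp\bra{\int_{-x_0}^{x_1}k_1(\zeta,y)\dx{\zeta}}$, which is the quantity appearing throughout \eqref{blocking_condition}; the remaining transverse factor $e^{-H(-x_0,y)}$ enters only through the Neumann problem on the cross-section $\Omega$.

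With this in hand I would build $w$ piecewise along $x_1$. For $x_1\ge 0$, to the right of $\supp k$, set $w$ equal to a constant at least $1$, which is a supersolution because $f(1)=0$ and $k\equiv 0$ there, and which dominates the front. For $x_1\le -x_0$, where again $k\equiv 0$, take an exponentially decaying profile super-solving $-\Delta w=f(w)$; since $f'(0)<0$ this is the linearised decay $w\sim e^{\mu x_1}$ with $\mu=\sqrt{-f'(0)}$, giving $w\to 0$ as $x_1\to-\infty$. The whole problem is therefore to glue these two regimes across the obstacle slab $D_{-x_0}^0$ while keeping $w$ a supersolution; this is where the drift must do the work and where the criterion is born.

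The main obstacle is precisely this gluing in the slab. Testing the weighted equation against $w$ and integrating over $D_{-x_0}^0$, the net drift produces the gain factor $\exp\bra{-\int_{-x_0}^{0}\fint_\Omega k_1}$, while the transverse variation of $w$ — forced by the $y$-dependence of $k_1$ together with the Neumann condition — must be absorbed by a Poincaré/Sobolev inequality on $\Omega$; this is the source of the boundary constants $C_i(\partial\Omega,f,n)$, and the Hölder splitting separating the weight $\rho$ from $w$, combined with the Sobolev/trace estimates on the $n$-dimensional slab, produces the exponents $n-1$, $n+1$ and $\tfrac{n}{n+1}$. The concentration factor $\norm{\rho^{-1}}_{L^\infty(D_{-x_0}^0)}^{n-1}\bra{\int_{D_{-x_0}^0}\rho^{\,n+1}}^{n/(n+1)}$ then quantifies the loss incurred when $k$ is peaked rather than spread out, since a concentrated $\rho$ inflates both its $L^\infty$ and $L^{n+1}$ norms. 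Balancing gain against loss, the matching inequality closes exactly when \eqref{blocking_condition} holds, and the delicate point throughout is to keep every constant explicit — in particular to track the interplay between the threshold of $f$, where the reaction changes sign and $w$ must still be decreasing, and the transverse estimate, so that the final criterion depends only on $f$, $n$ and the Lipschitz geometry of $\partial\Omega$.
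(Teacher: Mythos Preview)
Your reduction to a stationary supersolution plus comparison is correct and matches the paper, as does the observation that $-\Delta w + k\cdot\nabla w = -e^{H}\operatorname{div}(e^{-H}\nabla w)$. But the heart of your proposal --- an explicit piecewise supersolution with exponential tail on the left, constant $1$ on the right, and a ``glued'' piece across the slab --- is not what the paper does, and your account of the gluing has a real gap. You say you would ``test the weighted equation against $w$ and integrate over $D_{-x_0}^0$'', but testing yields an integral identity, not a function: you never specify what $w$ is on the slab, and with $k_1$ genuinely $y$-dependent there is no explicit barrier satisfying both the pointwise supersolution inequality and the Neumann condition on $\partial\Omega$. The difficulty is not to estimate the energy of a given $w$ but to produce one.

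The paper's construction is variational rather than explicit. One fixes $a>0$, takes the reference profile $w_0(x)=(x_1/a)\chi_{[0,a]}(x_1)$, and shows that under \eqref{blocking_condition} $w_0$ is a \emph{strict local minimum} of $J_{D_R^a}(w)=\int_{D_R^a}\bigl(\tfrac12|\nabla w|^2+F(w)\bigr)e^{-H}\dx{x}$ in $H^1_{0,1}(D_R^a,\psi)$, with a radius $\delta>0$ independent of $R$. The core lemma is the coercivity $J_{D_R^0}(w)\ge J_{D_R^0}(0)+\alpha\|w\|_{H^1(D_R^0,\psi)}^2$ for small $\|w\|$; the exponents you identified arise here, from controlling the Taylor remainder $\eta(w)w^2$ by Sobolev embedding $H^1\hookrightarrow L^{2^*}$ on the part where $\psi$ is constant and $W^{1,p}\hookrightarrow L^{p^*}$ with $p=2(n{+}1)/(n{+}2)$ on $D_{-x_0}^0$, the latter combined with a H\"older split $1/p=1/2+1/j$, $j=2(n{+}1)$, to pass between weighted and unweighted norms. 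The direct method then gives a local minimizer $w_R$ solving the Euler--Lagrange equation; letting $R\to-\infty$ produces a solution $w_\infty$ on $D_{-\infty}^a$ with $w_\infty\to 0$ at $-\infty$ (from the uniform $\delta$-bound and Fatou), and extending by $1$ for $x_1\ge a$ yields the global supersolution. No decay-rate matching at $-\infty$ is needed: since $w_\infty\ge 0$ and $\phi(x_1+ct)\to 0$ uniformly on $\{x_1<a\}$ as $t\to-\infty$, the ordering at $t=-\infty$ is automatic and a generalized comparison principle propagates it.
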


The strategy of the proof of Theorem  \ref{theorem:blocking} is to construct the stationary supersolution $w$ as local minimizer of an appropriate functional in some weighted Sobolev space. The main observation is that \eqref{DiffEqu} becomes variational if the drift term is encoded in some weight. With this `trick' we are able to use ideas from \cite{front_blocking}, where the authors show that a neck can be introduced into a given tube in such a way that propagation gets blocked by constructing a stationary supersolution that vanishes behind the neck.

The paper is organized as follows. First we clarify assumptions and notation. Then, for the sake of completeness we shortly address the question of existence and uniqueness in section \ref{section:existence_and_uniqueness}. In section \ref{section:blocking} we give the strategy of the proof and the proof of Theorem \ref{theorem:blocking}.

\begin{rem}
	The case $n=2$ can be handled by extending the solution to three dimensions, i.e.
	\begin{align}
		&\tilde{u}(x_1,x_2,x_3) := u(x_1,x_2) ~,~ \tilde{k}(x_1,x_2,x_3) :=(k_1(x_1,x_2), k_2(x_1,x_2),0) \\ &\text{ and } \tilde{H}(x_1,x_2,x_3) := H(x_1,x_2)
	\end{align}
	and assuming without loss of generality that $\Omega = (0,r)$, by setting (e.g.) $\tilde{\Omega} =(0,r) \times (0,1)$.
	Then Theorem \ref{theorem:blocking} provides a criterion for blocking that can be projected back onto $n=2$.
\end{rem}

\begin{rem}
	An analogous criterion for almost unchanged propagation as in the one-dimensional case in \cite{front_blocking_vs_propagation_drift} does also hold in the $n$-dimensional case, ie.
	\begin{thm} [almost unchanged propagation]
		There is a constant $C(f,x_0)>0$  (only depending on $f$ and $x_0$ which is explicitly given) such that if
		$K:= \max \set { \max_{x \in D} k_1(x),0}$ is small enough to satisfy 
		\begin{align} \label{condition_theorem_propagation}
		K  \leq C(f,x_0),
		\end{align}
		then the unique solution $u$ of \eqref{DiffEqu} converges to a traveling wave with profile $\phi$ and speed $c$, i.e.
		\begin{align}
		u(t,x) - \phi(x_1+ct +\beta) \rightarrow 0 \text{ as } t \rightarrow +\infty \text{ uniformly in } D, 
		\end{align}
		where $\beta \in \R$ is a constant shift.
	\end{thm}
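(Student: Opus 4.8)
The plan is to establish propagation by the parabolic comparison principle, trapping the entire solution $u$ between \emph{planar} sub- and supersolutions modelled on the unperturbed front $\phi$, in the spirit of the Fife--McLeod stability argument and in direct analogy with the one-dimensional construction of \cite{front_blocking_vs_propagation_drift}. The starting observation is that, because $\phi=\phi(x_1+ct)$ does not depend on the transverse variable, any ansatz of the form $\phi(x_1+ct\pm\xi(t))\pm q(t)$ automatically satisfies the Neumann condition on $\partial D=\R\times\partial\Omega$ and is annihilated by the transverse part of the drift, i.e.\ writing $k=(k_1,k')$ one has $k'\cdot\nabla_y\phi=0$. Hence the transverse geometry and the components $k_2,\dots,k_n$ drop out of every sub-/supersolution inequality, only $k_1$ survives, and the problem is reduced to the one-dimensional estimate underlying \eqref{condition_theorem_propagation}.

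First I would compute the defect of the candidate subsolution. With $z:=x_1+ct-\xi(t)$ and $\underline u:=\phi(z)-q(t)$, the travelling-wave identity $\phi''-c\phi'+f(\phi)=0$ gives
\begin{align}
\partial_t\underline u-\Delta\underline u+k\cdot\nabla\underline u-f(\underline u)
&=\big[f(\phi)-f(\phi-q)\big]\\
&\quad-\phi'(z)\big(\xi'(t)-k_1\big)-q'(t),
\end{align}
and the mirror identity holds for the supersolution $\bar u:=\phi(x_1+ct+\eta(t))+q(t)$. Away from the front, where $\phi$ is close to the stable zeros of $f$ and $\phi'$ is exponentially small, the negativity of $f'$ at those zeros makes $f(\phi)-f(\phi-q)\le-\mu q$ and absorbs the $q'$ term once $q(t)=\delta e^{-\beta t}$ with $\beta<\mu$; there the drift is irrelevant since $\phi'$ (and, outside $[-x_0,0]$, also $k_1$) is negligible. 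Near the front, where $\phi'\ge\delta_0>0$, the shift term must dominate: using $k_1\le K$ it suffices to let $\xi$ grow at rate $\xi'(t)=\tfrac{L+\beta}{\delta_0}q(t)+K_{\mathrm{loc}}(t)$, where $K_{\mathrm{loc}}(t):=\sup\{k_1(x_1,y):x_1+ct-\xi(t)\text{ near }0\}\le K$ accounts for the adverse push while the front overlaps the obstacle.

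The heart of the matter --- and the step I expect to be the main obstacle --- is to prove that the accumulated shift $\xi_\infty:=\lim_{t\to+\infty}\xi(t)$ is \emph{finite}, i.e.\ that the front is merely \emph{delayed} rather than \emph{blocked}. Since $\supp k_1\subset[-x_0,0]\times\bar\Omega$ is compact in $x_1$, the product $k_1\,\phi'(z)$ is supported on a single finite time window during which the front $x_1=-ct+\xi(t)$ traverses the obstacle, of length controlled by $x_0$ and by the residual front speed $c-\xi'$; the contribution of $K_{\mathrm{loc}}$ to $\xi_\infty$ is therefore governed by a boundedness (fixed-point) estimate that closes to a finite limit precisely when $K$ lies below the explicit threshold $C(f,x_0)$. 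The dependence of this threshold on $f$ enters through $c,\delta_0,\mu,L$ and on $x_0$ through the window length, and its conceptual meaning is transparent in the variational picture underlying the blocking theorem: the weight $e^{-H}$ with $\nabla H=k$ recasts the drift as an effective potential whose barrier across $[-x_0,0]$ has size $\int_{-x_0}^0 k_1\le K x_0$, and \eqref{condition_theorem_propagation} is exactly the requirement that this barrier stay below the bistable trapping threshold, so that no stationary intermediate front of the kind produced by Theorem \ref{theorem:blocking} can obstruct the flow.

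Finally I would fix a very negative time $t_0$ and use the hypothesis $u(t,\cdot)-\phi(x_1+ct)\to0$ as $t\to-\infty$ to choose the initial values $\xi(t_0),\eta(t_0),q(t_0)$ so that $\underline u(t_0,\cdot)\le u(t_0,\cdot)\le\bar u(t_0,\cdot)$ on all of $D$; the comparison principle then preserves this sandwich for every $t>t_0$. As $\xi(t)\to\xi_\infty$, $\eta(t)\to\eta_\infty$ are finite and $q(t)\to0$, the solution is trapped in a bounded band of translates of $\phi$, after which the standard asymptotic stability of bistable fronts (applicable once the front has cleared the drift region $x_1<-x_0$, where the equation is drift-free) upgrades the trapping to convergence towards a single translate, yielding $u(t,x)-\phi(x_1+ct+\beta)\to0$ uniformly in $D$ for some $\beta\in\R$. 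I anticipate that, relative to \cite{front_blocking_vs_propagation_drift}, the only genuinely new verification is that the planar ansatz is compatible with the Neumann boundary and the transverse drift, after which the one-dimensional estimates transfer essentially verbatim.
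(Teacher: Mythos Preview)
The paper does not actually prove this theorem. The statement appears only inside a Remark in the introduction, where the author asserts that ``an analogous criterion for almost unchanged propagation as in the one-dimensional case in \cite{front_blocking_vs_propagation_drift} does also hold in the $n$-dimensional case'' and then records the statement without further argument; the implicit claim is simply that the one-dimensional proof carries over.

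Your proposal is therefore not in conflict with the paper --- on the contrary, it identifies precisely the mechanism by which the transfer works. Because any ansatz of the form $\phi(x_1+ct\pm\xi(t))\pm q(t)$ depends on $x_1$ alone, it automatically satisfies the Neumann condition on $\R\times\partial\Omega$ and is annihilated by the transverse drift $k'\cdot\nabla_y$, so that every sub-/supersolution inequality collapses to its one-dimensional counterpart with only $k_1$ surviving. This is exactly why the threshold $C(f,x_0)$ is independent of $\Omega$ and of $k_2,\dots,k_n$, as the statement asserts. The Fife--McLeod-type scheme you outline (exponentially decaying corrector $q$, finite accumulated shift $\xi_\infty$ governed by the transit of the front through $[-x_0,0]$, followed by asymptotic stability once the front has cleared the support of $k$) is the standard construction and matches what is done in \cite{front_blocking_vs_propagation_drift}.

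The one place where your sketch remains heuristic is the finiteness of $\xi_\infty$: the ``fixed-point estimate'' guaranteeing that the effective speed $c-\xi'(t)$ stays bounded below by a positive quantity while the front crosses $[-x_0,0]$ is the quantitative content of the condition $K\le C(f,x_0)$ and would need to be spelled out. The paper itself does not do this either; it delegates the entire argument to \cite{front_blocking_vs_propagation_drift}.
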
 
\end{rem}

\begin{rem} 
	For $n=1$ even though we did only derive the criterion for $n \geq 3$ the expression behaves as in \cite{front_blocking_vs_propagation_drift}. But due to the Sobolev-embeddings becoming weaker in increasing dimensions, this generalized criterion is weaker,
in the sense that the criterion for $n>m$ might not be satisfied for an $m$-dimensional drift term constantly extended to $n$ dimensions, that satisfies the criterion in $m$ dimensions.
\end{rem}

\begin{rem}
	It has been proven in \cite{suden_opening} that in three dimensions abrupt opening of a channel that is also steep enough leads to blocking of a generalized solution of \eqref{DiffEqu}. This suggests that a very concentrated drift term with sufficiently big integral should satisfy criterion \eqref{blocking_condition}. Indeed, if we set
	\begin{align}
		k^{\epsilon,C}(x) := \frac{C}{\epsilon} \chi_{[-\epsilon,0]}(x_1) e_1
	\end{align}
	for $\epsilon, C >0$. Then criterion \eqref{blocking_condition} holds if $C$ is big enough such that
	\begin{align}
		\frac{C_1(n,f)}{C_3(n,f,\partial \Omega)} > \exp \bra { - \int \limits_{-x_0}^0 \fint \limits_\Omega k_1^{\epsilon,C} } = \exp(-C)
	\end{align}
	and $\epsilon$ is small enough such that
	\begin{align}
		\frac{C_2(n,f,\partial \Omega)}{C_3(n,f,\partial \Omega)} &>   \bra {  \int \limits_{D_{-x_0}^0} \exp \bra { \int \limits_{-x_0}^{x_1} k_1^{\epsilon,C}(\zeta,y) \dx{\zeta}      }^{n+1} \dx{x_1} \dx{y}    }^\frac{n}{n+1}  \\
		&= \bra { \frac{\abs {\Omega} }{(1+n) C} \bra { \exp((1+n)C) -1} }^\frac{n}{n+1} \epsilon^\frac{n}{n+1} .
	\end{align} 
	(Note that here $x_0(\epsilon)=\epsilon$.)
\end{rem}

\section*{Acknowledgement}
We thank Prof. Dr. G.S. Weiss for fruitful discussions.

\section{Notation and assumptions} \label{section:notation_assumption}
In this section we clarify the assumptions we make and give the notation that shall be used in the following.
Let $D$ be the cylindrical domain as specified above. The nonlinearity $f$ shall be of bistable type and obey
\begin{align}
\label{F_1} &f \in C^{2}([0,1]), \\
\label{F_2} &f(0)  = 0, f(1) = 0, \\
\label{F_3}&f'(0) <0 , f'(1) <0, \\
\label{F_4}  &f<0 \text{ on } (0, \theta), f >0 \text{ on } (\theta, 1) \text{ for some } \theta \in (0,1), 
\\
\label{F_8}& \int \limits_0^1 f(u) \dx{u}  >0 .
\end{align}

Then there is a unique speed $c>0$ and unique (up to translation) traveling wave profile $\phi$ for the nonlinearity $f$, i.e
there are speeds $c >0$ and wave profile $\phi: \R \rightarrow \R$, such that (see e.g. \cite{FifeMcLeod}) 
\begin{align} \label{ODE_travelling_wave}
\begin{cases}
\phi^{\prime \prime}(z) - c \phi^\prime(z) + f(\phi(z)) =0 \text { for all } z \in \R, \\
\phi(- \infty) = 0, \phi(+ \infty)=1, \\
0<\phi(z) < 1\text{ for all } z \in \R ,\\
\phi'(z) >0 \text{ for all } z \in \R.
\end{cases}
\end{align}

\section{Existence and uniqueness} \label{section:existence_and_uniqueness}

For the sake of completeness and to ensure the reader that we are not investigating the empty set of solutions or assume uniqueness of solutions of \eqref{DiffEqu} without justification, let us mention that existence and uniqueness  for solutions of \eqref{DiffEqu} can be obtained almost literally copying the proof of Theorem 2.1 in \cite{MatanoObst} or Appendix A in \cite{change_of_speed_1}.

\section{A necessary condition for propagation / a sufficient condition for blocking} \label{section:blocking}

The objective of this section shall be the proof of Theorem \ref{theorem:blocking}.
The proof of our result on blocking will rely mainly on the observation that the stationary version of problem \eqref{DiffEqu} 
\begin{align}
	-\Delta w +k \cdot \nabla w =f(w) \text{ in } A \subset D
\end{align}
is variational with functional
\begin{align}
J_A(w) = \int \limits_A \bra { \frac{1}{2} \abs {\nabla w }^2 +F(w)     } \psi(x) \dx{x},
\end{align}
where $F(t):= \int_t^1 f(s) \dx{s}$ and $\psi$ is given by
\begin{align} \label{Def:psi}
\psi(x) := \exp (-H(x))  \quad \text{ in } D.
\end{align}
Therefore $\psi(x)>0$ for all $x \in D$ and $\psi$  is qualified as weight function. With this trick of encoding the drift term in a weight function we are now in the position to use variational techniques to construct a local minimizer of the functional $J$ that will then be extended to a stationary supersolution.
The strategy of this proof is inspired by the strategy used in \cite{front_blocking} where the authors show that a thin neck can be introduced into a given channel in such a way that a traveling wave gets blocked.

Let us briefly describe our strategy in the following. The goal is to construct $w$ such that
\begin{align}
	u(t,x) \leq w(x) \quad \text{ for all } x \in D, t \in \R
\end{align}
and $w(x)\rightarrow 0$ as $x_1 \rightarrow -\infty$, which will be possible if condition \eqref{blocking_condition} is met.

To make $J$ well defined and to ensure that $F$ grows quadratically at infinity $f$ shall be extended linearly to a function $f \in C^{1,1}(\R)$.
Let us introduce the following shorthand notation
\begin{align}
	D_a^b &:= \set {  a<x_1<b} \cap D = (a,b) \times \Omega \text{ for } a<b \text{ and } \\
	H^1(A,\psi) &:= H^1(A,\psi \dx{x}).
\end{align}
In order to construct such a supersolution we

\begin{enumerate}
	\item first show that for any $R<-x_0-1$ (arbitrary but fixed) there is $\delta(f, \psi,k, a)>0$ independent of $R$ such that 
	\begin{align}
		J_{D_R^a} (w) > J_{D_R^a} (w_0)
	\end{align} 
	for all $w \in H^1_{0,1} (D_R^a, \psi )$ such that $\norm {w-w_0}_{H^1(D_R^a, \psi )} = \delta$ and $w_0(x) = w_0(x_1) := \frac{x_1}{a} \chi_{[0,a]}(x_1)$, $a>0$ is an auxiliary constant that can be chosen in an optimal way (depending on $f$) and we understand
	\begin{align}
		H^1_{0,1}(D_c^d, \psi) := \set { v \in H^1(D_c^d, \psi) : v(c,y)=0, v(d,y)=1 \text{ for almost all } y \in \Omega}
	\end{align}
	(where boundary values are understood in the sense of traces).
	From this we can conclude by the direct method, that there is a local minimizer $w_R \in H^1_{0,1} (D_R^a, \psi )\cap \set { \norm {w-w_0}_{H^1(D_R^a, \psi)} \leq \delta }$ that is a weak solution of
	\begin{align}
		-\Delta w_R - \frac{\nabla \psi}{\psi} \cdot \nabla w_R = f(w_R) &\text{ in } D_R^a, \\
		\frac{\partial w_R}{\partial \nu} =0 &\text{ on } (R,a) \times \partial \Omega , \\
		w_R(R,y)=0, w_R(a,y)=1 &\text{ for almost all } y \in \Omega
	\end{align}	
	with $0<w_R<1$ in $D_R^a$ (by comparison principle).
	\iftoggle{Nebenrechnungen}{%
		\\
		\textcolor{blue}{ 
			\hrulefill \\
			\emph{Nebenrechnung:}
	Wir können in die schwache Formulierung (mithilfe von Approximation) $H^1$-Funktionen mit Nullrandwerten einsetzen. D.h. wir wissen
	\begin{align}
		\int \bra { \nabla w_R \cdot \nabla \phi -f(w_R) \phi      } \psi \dx{x} =0 \quad \forall \phi \in \mathring{H}^1
	\end{align}
	Damit ist $w_R^- := \min \set {w_R,0}$ eine zulässige Testfunktion und wir bekommen 
\begin{align}
0 \leq	\norm {\nabla w_R^-}^2_{L^2_\psi} = \int f(w_R) w_R^- \underbrace{\psi}_{>0}
\end{align}
Ferner wissen wir, dass, wenn $w_R^- \neq 0$, dann ist $w_R <0$ und damit $f(w_R)>0$. Also kann die Ungleichung nur gelten, wenn $w_R^- \equiv 0$ fast überall. Damit folgt $0 \leq w_R$ fast überall. 
Man kann mit $(w_R-1)^+$ testen (es ist eine zulässige Testfunktion).
Da $k$ und damit $\psi$ glatt sind und $f \in W^{2,\infty}$ folgt mit der einfachen Regularitätstheorie aus Evans (§6.3 Thm 2 und General Sobolev Inequality §5.6 Thm 6) schon, dass die Lösung $w_R \in C^{2,\gamma}$ und damit liefert das starke Maxiumsprinzip für klassische Lösungen (Evans §6.4 Thm 3) sofort (,da $w_R$ aufgrund der Randwerte nicht Konstant sein kann), dass $w_R$ sein Maximum und Minimum, also $0$ und $1$ nicht im Inneren annehmen kann.	
	\hrulefill 
\\	
}
}{%
}
\item In a next step we pass to the limit $R \rightarrow -\infty$ exploiting that $\delta$ is independent of $R$ and show that the limit $w_\infty$ solves
\begin{align}
	-\Delta w_\infty - \frac{\nabla \psi}{\psi} \cdot \nabla w_\infty = f(w_\infty) &\text{ in } D_{-\infty}^a, \\
\frac{\partial w_\infty}{\partial \nu} =0 &\text{ on } (-\infty,a) \times \partial \Omega , \\
w_\infty(-\infty,y)=0, w_\infty(a,y)=1 &\text{ for almost all } y \in \Omega 
\end{align}
and it follows for such a solution (by the strong maximum principle) that $0<w_\infty<1$ in $(-\infty,a) \times \Omega$.
\item In the last step we show that if we extend $w_\infty$ by $1$ into $[a,\infty) \times \Omega$ it is a supersolution of \eqref{DiffEqu}.
\end{enumerate}

\begin{figure}[!h]
	\psset{xunit=1cm,yunit=1cm}
	\frame{
		\begin{pspicture*}
		(-5.2,-1.2)(7.1,2.3)
		\psframe[fillstyle=vlines,hatchsep=0.2,hatchangle=120](2,2)
		\psline(-5,2)(7,2)
		\psline(-5,0)(7,0)
		\psline(0,0)(0,-0)
		\psline(2,0)(2,-0)
		\rput[lb](0.5,-0.5){ \boldmath$ k(x)$}
		\rput[lb](-0.4,-0.8){$-x_0$}
		\rput[lb](1.9,-0.8){$0$}
		\rput[lb](3.9,-0.8){$a$}
		\rput[lb](-4,-0.8){$R$}
		\psline[linestyle=dashed](3.9,0)(3.9,2)
		\psline[linestyle=dashed](-4,0)(-4,2)
		\rput[lb](-2,-0.8){$-x_0-1$}
		\psline[linestyle=dashed](-1,0)(-1,2)
		\end{pspicture*}
	}
	\caption{Infinite cylinder with transition zone}
\end{figure}

\begin{prop} \label{proposition:blocking:fixed_R}
	Assume that condition \eqref{blocking_condition} holds, then for all $R<-x_0-1$ there is 
	\begin{align} \label{Def:delta} 
	\delta &:= \bra {\frac{\alpha}{2}}^\frac{1}{q-2} \psi^\frac{1}{2}(-x_0) \max \left \{   \tilde{\gamma} \bra { \frac{\alpha}{4},q   } C_1^q(\partial \Omega)  , \right . \\
	& \left . \left ( \tilde{\gamma} \bra {\frac{\alpha}{2} , m} C_2^m(\partial \Omega)  2^\frac{(2-q)m}{2p} \norm {  \exp \bra {-  \int \limits_{-x_0}^{x_1} k_1(\zeta,y) \dx{\zeta} }   }_{L^\infty(D_{-x_0}^0)}   \right . \right . \\
	& \left . \left .\pra {\int \limits_{D_{-x_0}^0}     \exp \bra {\frac{j}{2}   \int \limits_{-x_0}^{x_1} k_1(\zeta,y) \dx{\zeta} } \dx{x_1} \dx{y}   }^\frac{m}{j}  \right )^\frac{q-2}{m-2}  \right \}^{-\frac{1}{q-2}}   >0
	\end{align}
	(independent of $R$)
	such that for any $R<-x_0-1$ there is a local minimizer $w_R \in H^1_{0,1}(D_R^a, \psi ) \cap \set { \norm {w_R - w_0}_{H^1_{(D_R^a, \psi )}}    < \delta  }$ of 
	\begin{align}
	J_{D_R^a} 
	\end{align}
	in $H^1_{0,1}(D_R^a, \psi ) \cap \set { \norm {w_R - w_0}_{H^1_{(D_R^a, \psi )}}    \leq \delta  }$.
	The constant
	\begin{align} \label{Def:alpha}
		\alpha:= \min \set { \frac{1}{4} , -\frac{f'(0)}{4}   } >0
	\end{align}
	only depends on $f$, $\tilde{\gamma}$ as defined in \eqref{def:gamma_tilde} depends on $n$ and $f$ and the other constants depend on $n$ and are given by $q:= 2^* = \frac{2n}{n-2},p = 2 \frac{n+1}{n+2},m:=p^*= \frac{2n}{n-1}
	, j:=2(n+1)$. 
	
\end{prop}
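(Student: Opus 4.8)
The plan is to obtain $w_R$ as a minimiser of $J_{D_R^a}$ over the closed, convex and weakly compact constraint set $\cB := H^1_{0,1}(D_R^a,\psi)\cap\set{\norm{w-w_0}_{H^1(D_R^a,\psi)}\leq\delta}$, and then to argue that this minimiser is an \emph{interior} point, i.e.\ that it already satisfies the strict bound $\norm{w_R-w_0}_{H^1(D_R^a,\psi)}<\delta$; interiority is exactly what turns a constrained minimiser into a genuine local minimiser solving the stated Euler--Lagrange equation. Existence of a minimiser over $\cB$ is routine: once $f$ is extended linearly so that $F$ grows quadratically, $J_{D_R^a}$ is coercive, and being convex in $\nabla w$ and continuous in $w$ it is weakly lower semicontinuous on the reflexive space $H^1(D_R^a,\psi)$, while $\cB$ is bounded and weakly closed. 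The real work is therefore a quantitative energy estimate. Writing $v := w-w_0 \in H^1_{0,0}(D_R^a,\psi)$ and using $F\in C^{1,1}$ with $F'=-f$, I would expand
\begin{align}
J_{D_R^a}(w_0+v)-J_{D_R^a}(w_0) = \int_{D_R^a}\Big(\nabla w_0\cdot\nabla v+\tfrac12\abs{\nabla v}^2\Big)\psi\dx{x}+\int_{D_R^a}\big(F(w_0+v)-F(w_0)\big)\psi\dx{x},
\end{align}
and show that for $\norm{v}_{H^1(D_R^a,\psi)}=\delta$ the right-hand side is strictly positive.

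The positive, coercive part of this difference is produced by the long region $D_R^0$, on which $w_0\equiv0$ and hence $\nabla w_0=0$. Since $f(0)=0$ and $f'(0)<0$ by \eqref{F_3}, the potential $F$ is strictly convex near $0$, so on the subset where $\abs{v}$ is small one has $F(v)-F(0)\geq -\tfrac{f'(0)}{4}v^2$; adding the full gradient term $\tfrac12\abs{\nabla v}^2$ gives a lower bound of the form $\alpha\norm{v}_{H^1(D_R^0,\psi)}^2$, the two entries of the minimum defining $\alpha$ in \eqref{Def:alpha} accounting respectively for the gradient and the potential curvature. The decisive structural point is that this coercive contribution only improves as $R\to-\infty$, whereas every obstruction to positivity is localised in the fixed slab $D_{-x_0}^a$; this is precisely why $\delta$ can be chosen independently of $R$.

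It remains to dominate the negative contributions — the superquadratic defect of $F(w_0+v)-F(w_0)$ on the set where $\abs{v}$ is large, where convexity of $F$ is lost and only quadratic growth survives — by higher powers of $\norm{v}$. Here I would split $D_R^a=D_R^{-x_0}\cup D_{-x_0}^0\cup D_0^a$. On $D_R^{-x_0}$ the drift vanishes and $\psi\equiv\psi(-x_0)$ is constant, so the defect is controlled by $\norm{v}_{L^q}^q$ via the Sobolev embedding $H^1\hookrightarrow L^{2^*}$ with $q=2^*=\tfrac{2n}{n-2}$ and constant $C_1(\partial\Omega)$. On the drift slab $D_{-x_0}^0$ the weight $\psi=\exp(-H)$ is no longer constant; rewriting it through $\exp\big(\mp\int_{-x_0}^{x_1}k_1\big)$ and applying Hölder with the exponents $p=2\tfrac{n+1}{n+2}$, $m=p^*=\tfrac{2n}{n-1}$, $j=2(n+1)$ together with the embedding $H^1\hookrightarrow L^m$ (constant $C_2(\partial\Omega)$) reproduces exactly the $k_1$-weighted factors $\norm{\exp(-\int_{-x_0}^{x_1}k_1)}_{L^\infty(D_{-x_0}^0)}$ and $\big(\int_{D_{-x_0}^0}\exp(\tfrac{j}{2}\int_{-x_0}^{x_1}k_1)\big)^{m/j}$ that appear in \eqref{Def:delta}. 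Collecting the terms — and absorbing the cap contribution discussed below — yields
\begin{align}
J_{D_R^a}(w_0+v)-J_{D_R^a}(w_0)\geq \tfrac{\alpha}{2}\norm{v}_{H^1(D_R^a,\psi)}^2-A\,\norm{v}_{H^1(D_R^a,\psi)}^q-B\,\norm{v}_{H^1(D_R^a,\psi)}^m,
\end{align}
where $A$ and $B$ are essentially the two expressions inside the maximum in \eqref{Def:delta} (the factor $\psi^{1/2}(-x_0)$ there coming from converting the weighted norm into the unweighted one used by the embeddings). The value of $\delta$ in \eqref{Def:delta} is chosen as the radius at which $\tfrac{\alpha}{2}\delta^2$ equals the larger of $A\delta^q$ and $B\delta^m$, so that the bound is strictly positive on the sphere; condition \eqref{blocking_condition} is exactly what guarantees that this balance leaves a genuine barrier. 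Feeding the strict positivity into the direct-method argument forces the minimiser into the open ball and produces the desired weak solution $w_R$ with $0<w_R<1$.

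The step I expect to be the main obstacle is the weighted bookkeeping on the drift slab $D_{-x_0}^0$: one has to pass from the weighted norm $\norm{\cdot}_{H^1(\psi)}$ to the unweighted Sobolev norm while keeping every exponential weight explicit and, crucially, $R$-independent, and it is this requirement that forces the precise triple $(p,m,j)$ of Hölder exponents rather than some cruder choice. A secondary delicate point, which I would treat with some care, is the first-variation (linear-in-$v$) contribution coming from the cap $D_0^a$, where $w_0$ ramps and is not a critical point: after integration by parts it reduces to a fixed, $R$-independent functional supported on $D_0^a$, and the auxiliary width $a$ is to be chosen optimally in terms of $f$, with the surplus $\tfrac14$ built into $\alpha$ in \eqref{Def:alpha} used to absorb it, so that the barrier on the sphere survives.
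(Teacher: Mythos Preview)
Your treatment of the long region $D_R^0$ is essentially the paper's Lemma~\ref{lemma:auxiliary_lemma_fixed_R}: Taylor-expand $F$ at $0$, harvest the coercive $\alpha\norm{v}_{H^1(D_R^0,\psi)}^2$, and dominate the remainder by $\norm{v}^q$ on $D_R^{-x_0}$ (where $\psi$ is constant) and by $\norm{v}^m$ on $D_{-x_0}^0$ via the $(p,m,j)$ H\"older/Sobolev chain. That part is correct and matches the paper.

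The gap is in the cap $D_0^a$ and, as a consequence, in the role you assign to condition~\eqref{blocking_condition}. You propose to treat the first variation at $w_0$ as a linear functional in $v$ and ``absorb'' it using the surplus $\tfrac14$ in $\alpha$. This cannot work at the level of scaling: a nonzero linear functional $L(v)$ evaluated on the sphere $\norm{v}=\delta$ ranges over $[-\norm{L}\delta,\norm{L}\delta]$ and cannot be dominated by $\tfrac{\alpha}{2}\delta^2$ for \emph{small} $\delta$; it would force a lower bound on $\delta$, conflicting with the upper bound needed to kill the $\delta^q,\delta^m$ terms. Relatedly, your final inequality
\[
J_{D_R^a}(w_0+v)-J_{D_R^a}(w_0)\ \ge\ \tfrac{\alpha}{2}\norm{v}^2-A\norm{v}^q-B\norm{v}^m
\]
is strictly positive for all sufficiently small $\delta$ \emph{regardless of the drift}, so it cannot be where \eqref{blocking_condition} enters.

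The paper handles $D_0^a$ differently and this is where \eqref{blocking_condition} is actually used. There one does not expand around $w_0$; instead one uses the global lower bound $F(s)\ge K(s-1)^2$ to obtain $J_{D_0^a}(w)\ge\nu\norm{w-1}_{H^1(D_0^a,\psi)}^2$ with $\nu=\min\{K,\tfrac12\}$, together with the crude upper bound $J_{D_0^a}(w_0)\le\beta\,\psi(0)$. After the triangle-inequality conversion $\norm{w-1}^2\ge\tfrac12\norm{w-w_0}^2-\norm{w_0-1}^2$ this produces
\[
J_{D_R^a}(w)-J_{D_R^a}(w_0)\ \ge\ \min\{\tfrac{\nu}{2},\alpha\}\,\delta^2-(\nu\gamma+\beta)\,\psi(0),
\]
i.e.\ a coercive term minus a \emph{constant} proportional to $\psi(0)$. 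Since $\delta$ is already fixed by the $D_R^0$ analysis (your $A,B$), the remaining requirement is $\min\{\tfrac{\nu}{2},\alpha\}\,\delta^2>(\nu\gamma+\beta)\,\psi(0)$; writing $\psi(0)/\psi(-x_0)=\exp\bigl(-\int_{-x_0}^0\fint_\Omega k_1\bigr)$ and inserting the explicit $\delta$ from \eqref{Def:delta} gives exactly~\eqref{blocking_condition}. So the blocking condition is the competition between the fixed $\delta$ and the cap constant, not between $\delta^2$ and $\delta^q$.
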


In order to prove this we will split up $D_R^a$ into the part $D_0^a$ where $\psi$ is constant and $w_0$ is linear and $D_R^0$ where $w_0 \equiv 0$ and $\psi$ does encode the behaviour of $k$.

On the second subset we will exploit the following Lemma.

\begin{lem} \label{lemma:auxiliary_lemma_fixed_R}
	With $\delta$ and $\alpha$ given as in \eqref{Def:delta}, \eqref{Def:alpha} in Proposition \ref{proposition:blocking:fixed_R}, it holds that for all $w \in H^1_0(D_R^0,\psi ) \cap \set { \norm {w-w_0}_{H^1(D_R^0, \psi ) } = \norm {w}_{H^1(D_R^0, \psi )} \leq \delta  }$
	\begin{align}
	J_{D_R^0}(w) \geq J_{D_R^0} (w_0) + \alpha \norm { w  }^2_{H^1(D_R^0, \psi ))},
	\end{align}
	where
	$H^1_0(D_R^ 0, \psi )) : = \set { w \in  H^1(D_R^ 0, \psi )) : w(R,y)=0 \text{ for almost all } y \in \Omega } $.
\end{lem}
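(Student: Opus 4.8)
The goal of Lemma~\ref{lemma:auxiliary_lemma_fixed_R} is a coercivity estimate: on the region $D_R^0$ where $w_0\equiv 0$, the functional $J_{D_R^0}$ stays above its value at $w_0$ by a definite quadratic amount $\alpha\norm{w}^2_{H^1(D_R^0,\psi)}$, provided $w$ has zero left boundary trace and $\norm{w}_{H^1(D_R^0,\psi)}\le\delta$. Since $w_0\equiv 0$ on $D_R^0$ we have $F(w_0)=F(0)$ and $\nabla w_0=0$, so after subtracting the constant $\int_{D_R^0}F(0)\psi$ the plan is to show
\begin{align}
	J_{D_R^0}(w)-J_{D_R^0}(w_0)=\int_{D_R^0}\bra{\tfrac12\abs{\nabla w}^2+F(w)-F(0)}\psi\dx{x}\ge\alpha\norm{w}^2_{H^1(D_R^0,\psi)}.
\end{align}
First I would Taylor-expand $F$ about $0$. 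Because $F'(0)=-f(0)=0$ and $F''(0)=-f'(0)>0$ by \eqref{F_3}, there is a neighbourhood of $0$ on which $F(t)-F(0)\ge\tfrac{-f'(0)}{4}t^2$; with the definition of $\alpha$ in \eqref{Def:alpha} this yields a pointwise lower bound $F(w)-F(0)\ge 2\alpha w^2$ wherever $\abs{w}$ is below some threshold depending only on $f$. The subtlety is that $w$ need not be small pointwise even if it is small in $H^1(D_R^0,\psi)$, so I would split the integration domain into the set where $\abs{w}$ is below the threshold and its complement, controlling the bad set's contribution by higher integrability.

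The estimate then reduces to comparing the two summands of $\norm{w}^2_{H^1}=\norm{\nabla w}^2_{L^2(D_R^0,\psi)}+\norm{w}^2_{L^2(D_R^0,\psi)}$ against the controllable terms. The gradient term is free: half of $\tfrac12\abs{\nabla w}^2$ can absorb $\alpha\abs{\nabla w}^2$ as long as $\alpha\le\tfrac14$, which is exactly why $\alpha$ is capped at $\tfrac14$ in \eqref{Def:alpha}. The genuinely hard part is bounding the $L^2$-mass of $w$ by $\alpha\norm{w}^2_{L^2}$ plus a remainder that is negative or negligible; here the pointwise bound $F(w)-F(0)\ge 2\alpha w^2$ gives two multiples of $\alpha\norm{w}^2_{L^2}$ on the good set, one of which is used to produce the desired $\alpha\norm{w}^2_{L^2}$ and the other to swallow the error coming from the bad set.

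The main obstacle is controlling this bad-set error, and this is where the smallness constraint $\norm{w}_{H^1(D_R^0,\psi)}\le\delta$ and the elaborate definition of $\delta$ in \eqref{Def:delta} enter. On the set $\set{\abs{w}\ge\text{threshold}}$ the function $F(w)-F(0)$ could in principle be very negative, but since $f$ was extended so that $F$ grows only quadratically at infinity, the deficit is at most a constant times $w^2$ there; I would estimate its integral against $\psi$ by Hölder's inequality, splitting off a high $L^q$-norm of $w$ (with $q=2^*$ the critical Sobolev exponent) times the measure of the bad set, and bound the latter by $\delta^{\text{power}}$ via Chebyshev. The Sobolev embedding $H^1\hookrightarrow L^q$ on $D_R^0$ must be applied in the \emph{weighted} space, which forces the weight factors $\psi^{1/2}(-x_0)$ and the $L^\infty$- and $L^j$-norms of $\exp(\pm\int_{-x_0}^{x_1}k_1)$ appearing in \eqref{Def:delta} to track how the weight distorts the embedding constant; the constants $C_1(\partial\Omega)$, $C_2(\partial\Omega)$ and the function $\tilde\gamma$ from \eqref{def:gamma_tilde} encode the (Lipschitz-domain-dependent) Sobolev and interpolation constants. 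Choosing $\delta$ as in \eqref{Def:delta} then makes the bad-set remainder no larger than $\alpha\norm{w}^2_{H^1(D_R^0,\psi)}$, completing the estimate. The bookkeeping of exponents $q,p,m,j$ and verifying that the powers of $\delta$ close up is tedious but routine once the splitting is set up.
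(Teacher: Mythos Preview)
Your overall strategy---Taylor expand $F$ at $0$, observe that the quadratic part $\tfrac12|\nabla w|^2-\tfrac12 f'(0)w^2$ already dominates $2\alpha\norm{w}^2_{H^1(\psi)}$, and then absorb the Taylor remainder using Sobolev embeddings and the smallness $\norm{w}_{H^1(\psi)}\le\delta$---is exactly the paper's strategy. Two points of your execution, however, diverge from the paper in ways that matter for the lemma \emph{as stated}.

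First, you organise the remainder via a good-set/bad-set split on $\{|w|<\text{threshold}\}$ together with Chebyshev. The paper instead derives a single pointwise inequality
\[
|\eta(s)s^2|\le \gamma s^2+\tilde\gamma(\gamma,q)\,|s|^q\qquad\text{for all }s\in\R,\ \gamma>0,\ q>2,
\]
by case analysis on $s\le 0$, $s\in(0,1)$, $s\ge 1$ (this is where the explicit function $\tilde\gamma$ of \eqref{def:gamma_tilde} comes from). This replaces your level-set argument and feeds directly into Sobolev estimates without any Chebyshev step. Your good/bad-set route would produce \emph{some} admissible $\delta$, but not the specific expression in \eqref{Def:delta}; since the lemma fixes $\delta$ to be exactly that expression, the proof has to track the constants the paper's way.

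Second---and this is the step you only gesture at---the paper does \emph{not} apply one weighted Sobolev embedding on all of $D_R^0$. It splits the domain as $D_R^{-x_0}\cup D_{-x_0}^0$. On the unbounded piece $D_R^{-x_0}$ the weight $\psi$ is the constant $\psi(-x_0)$, so the unweighted critical embedding $H^1\hookrightarrow L^{q}$, $q=2^*$, applies with a constant $C_1(\partial\Omega)$ independent of $R$, and passing to the weighted norms costs exactly the factor $\psi(-x_0)^{1-q/2}$. On the bounded piece $D_{-x_0}^0$ the weight varies; here the paper uses a \emph{different} exponent $m=p^*<q$ with $p=2\tfrac{n+1}{n+2}$, so that H\"older with $\tfrac1p=\tfrac12+\tfrac1j$ converts $\norm{w}_{W^{1,p}}$ into $\norm{\psi^{-1/2}}_{L^j(D_{-x_0}^0)}\norm{w}_{H^1(\psi)}$. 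The mismatch $m<q$ is then repaired by the elementary interpolation $|z|^m\le b|z|^2+b^{(m-q)/(m-2)}|z|^q$. This two-region, two-exponent mechanism is precisely what generates both the $L^\infty$ and the $L^j$ norms of the exponential weight in \eqref{Def:delta} and explains why the four exponents $q,p,m,j$ appear. Without making this split explicit, you cannot recover the stated $\delta$, and your claim that ``choosing $\delta$ as in \eqref{Def:delta} then makes the bad-set remainder no larger than $\alpha\norm{w}^2$'' is an assertion rather than a derivation.
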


\begin{proof}[Proof of the Lemma]
	First by a Taylor expansion of $F$ we find that 
	
	\begin{align} \label{Taylor_expansion_of_functional}
	J_{D_R^0}(w) = \int \limits_{D_R^ 0} \bra {  \frac{1}{2}  \abs {\nabla w}^2 +F(0) +F'(0) w + \frac{1}{2} F''(0) w^2 + \eta(w) w^2     }  \psi \dx{x} .
	\end{align}
	We rewrite this as 
	\begin{align}
	J_{D_R^ 0}(w) = J_{D_R^0} (w_0) + \int \limits_{D_R^ 0} \bra { \frac{1}{2} \abs {\nabla w}^2 - \frac{1}{2} f'(0) w^2 + \eta(w) w^2    } \psi .
	\end{align}
	It is immediate that
	\begin{align}
	\int \limits_{D_R^0} \bra { \frac{1}{2} \abs {\nabla w}^2 -      \frac{1}{2} f'(0) w^2    } \psi \geq  \underbrace{  \min \set { \frac{1}{2} , -\frac{f'(0)}{2}   } }_{=:2\alpha}  \norm {w}^2_{H^1(D_R^0, \psi )} .
	\end{align}
	It remains to absorb the last term in the Taylor expansion into this. 
	In order to do so we will use that 
	\begin{align}
	\abs { \int \limits_{D_R^0} \eta(w) w^ 2    } \leq \sigma \bra {\psi,k,x_0 , \norm { w  }_{H^1(D_R^ 0, \psi )}  } \norm { w  }^2_{H^1(D_R^ 0, \psi )} ,
	\end{align}
	where $\sigma$ is independent of $R$ and $\sigma \rightarrow 0$ as $ \norm { w  }_{H^1(D_R^0, \psi )} \rightarrow 0$.

	First of all we estimate the error-term in the Taylor-expansion $\eta(w) w^2$. By definition
	\begin{align}
		\eta(s) s^2 = F(s) - \bra { F(0) + F'(0)s + \frac{1}{2} F''(0) s^ 2    } .
	\end{align}
	
	\begin{itemize}
		\item for $s \in (-\infty,0]$:
		\begin{align}
			\eta(s) s^2 &= \int \limits_s^1 f(t) \dx{t} - \bra { F(0) +F'(0)s + \frac{1}{2} F''(0) s^2     }  \\
			&= \underbrace { \int \limits_0^1 f(t) \dx{t} }_{=F(0)} + \int \limits_s^ 0 f'(0) t \dx{t}  - \bra { F(0) - \frac{1}{2} f'(0) s^2      } =0
		\end{align}
		
		\item for $s\in [0,1]$:
		Since $f \in C^2([0,1])$ hence $F \in C^3([0,1])$ from Taylor's Theorem we know that
		\begin{align}
			\eta(s) \leq \norm {f''}_{L^\infty} s \text{ hence in this regime } \eta(s) s^2 \leq \norm {f''}_{L^\infty} s^3 . 
		\end{align}
		
		\item for $s \in [1,\infty)$ we have
		\begin{align}
			\eta(s) s^2 &= \int \limits_s^1 f'(1) (t-1) \dx{t} - \bra { F(0) - \frac{1}{2} f'(0) s^2   } \\
			&= -\frac{1}{2} f'(1) (s-1)^2 -F(0) + \frac{1}{2} f'(0) s^2.
		\end{align}
		This implies that
		\begin{align}
			\abs {\eta(s) s^ 2} \leq  \underbrace { \bra {  \underbrace {  F(0) }_{\geq 0} \underbrace {- \frac{1}{2} \bra {f'(0) +f'(1) }}_{\geq 0}      }  }_{=: \mu(f)}  s^2   
		\end{align}
	\end{itemize}
	Putting everything together we find that
	\begin{align}
		\abs {\eta(s) s^2} \leq \norm {f''}_{L^ \infty} s^3 \chi_{(0,1)}(s) +\mu(f) s^2 \chi_{[1,\infty)}(s) .
	\end{align}
	
	We claim furthermore that for any $\gamma >0$ and $q>2$ there is $\tilde{\gamma}(\gamma,q,f,n) \geq 0$ s.t. 
	\begin{align}
		\abs {\eta(s) s^ 2 } \leq \gamma s^2 + \tilde{\gamma} \abs {s}^q \quad \text{ for all } s \in \R. 
	\end{align}
	
	\begin{itemize}
		\item $s \in (-\infty,0]$: nothing is to do.
		\item $s \in (0,1)$:
		\begin{itemize}
			\item if furthermore $\norm {f''}_{L^ \infty} s^3 \leq \gamma s^ 2$ nothing is to do.
			\item if otherwise $\norm {f''}_{L^ \infty} s^3 > \gamma s^ 2$ then
			\begin{align}
				\abs { \eta(s) s^2   } &\leq \norm {f''}_{L^ \infty} s^3 = \norm {f''}_{L^ \infty} s^ {3-q} s^q \\ &\leq 
				\begin{cases}
				\norm {f''}_{L^ \infty} s^q &\text{ if } 3-q \geq 0 \\
				\norm {f''}_{L^ \infty} \bra {\frac{\gamma}{\norm {f''}_{L^ \infty}}}^{3-q} s^q  &\text{ if } 3-q < 0
				\end{cases}
			\end{align}
		\end{itemize}
		\item $s\in [1,\infty)$: Since $q>2$ choosing $\tilde{\gamma} := \mu(f)$ does it in this regime.
	\end{itemize}
	To sum it all up we set
	\begin{align} \label{def:gamma_tilde}
		\tilde{\gamma}(\gamma,q,f,n) := \begin{cases}
	\max \set {	\norm {f''}_{L^ \infty}, \mu(f)} &\text{ if } 3-q \geq 0, \\
	\max \set {	\norm {f''}_{L^ \infty} \bra {\frac{\gamma}{\norm {f''}_{L^ \infty}}}^{3-q}, \mu(f) } &\text{ if } 3-q < 0 .
		\end{cases}
	\end{align}
	(In the following we are going to suppress the dependence on $f$ and $n$.)
	We know that in the case of $q := 2^* = \frac{2n}{n-2} >2$ the Sobolev-embedding 
	\begin{align}
		H^1(D) \hookrightarrow L^q(D), \quad \norm {w}_{L^q(D)} \leq C(\operatorname{Lip}(\partial D)) \norm {w}_{H^1(D)} 
	\end{align}
	does only depend on the Lipschitz-norm of the boundary of $D$ but is independent of the measure of the set $D$, we have that
	\begin{align}
		\norm {w}_{L^q(D_R^{-x_0})} \leq C_1(\operatorname{Lip}(\partial \Omega)) \norm {w}_{H^1(D_R^{-x_0})} ,
	\end{align}
	since $D$ is a cylindrical domain.
	
	Using this Sobolev-embedding we calculate (exploiting that $\psi$ is constant in $D_R^{-x_0}$):
	\begin{align}
		\int \limits_{D_R^{-x_0}} \abs { \eta(w) w^2   } \psi &\leq \int \limits_{D_R^{-x_0}} \bra { \gamma w^2 + \tilde{\gamma}(\gamma,q)  \abs {w }^q } \psi \\
		&\leq \gamma \norm {w}^2_{L^2(D_R^{-x_0}, \psi )} + \tilde{\gamma}(\gamma,q) \psi(-x_0) \norm {w}^q_{L^q(D_R^{-x_0})} \\
		&\leq \gamma \norm {w}^2_{L^2(D_R^{-x_0}, \psi )} + \tilde{\gamma}(\gamma,q) \psi(-x_0) C_1^q(\operatorname{Lip}(\partial \Omega)) \norm {w}^q_{H^1(D_R^{-x_0})}  \\
	&\leq \gamma \norm {w}^2_{L^2(D_R^{-x_0}, \psi )} + \tilde{\gamma}(\gamma,q) \psi^{1-\frac{q}{2}} (-x_0) C_1^q(\operatorname{Lip}(\partial \Omega)) \norm {w}^q_{H^1(D_R^{-x_0},\psi )} 
		\end{align}
	For the part where $\psi$ is not constant, i.e. in $D_{-x_0}^0$ we use a different embedding
	\begin{align}
		\norm {w}_{L^m(D_{-x_0}^0)} \leq C_2(\partial \Omega) \norm {w}_{W^{1,p}(D_{-x_0}^0)} ,
	\end{align}
	where we set $p:= 2 \frac{n+1}{n+2} \in (1,2)$ and  $m:= p^* = \frac{np}{n-p} = 2 \frac{n}{n-1} >2$.
 Then we get 
\begin{align}
	\int \limits_{D_{-x_0}^0} \eta(w) w^2 \psi &\leq \gamma \int \limits_{D_{-x_0}^0} w^2 \psi + \tilde{\gamma}(\gamma,m)  \int \limits_{D_{-x_0}^0} \abs {w}^m \psi \\
	&\leq \gamma \int \limits_{D_{-x_0}^0} w^2 \psi + \tilde{\gamma}(\gamma,m) \norm {\psi}_{L^\infty(D_{-x_0}^0)}  C_2^m(\partial \Omega) \norm {w}^m_{W^{1,p}(D_{-x_0}^0)}  \\
	&\leq \gamma \norm {w}^2_{L^2(D_{-x_0}^0, \psi)} + \tilde{\gamma}(\gamma,m) \norm {\psi}_{L^\infty(D_{-x_0}^0)}  C_2^m(\partial \Omega) \bra { \norm {\psi^{-\frac{1}{2}}}_{L^j(D_{-x_0}^0)}   2^\frac{2-p}{2p}  \norm {w}_{H^1(D_{-x_0}^0, \psi)}    }^m
		\end{align}
			\iftoggle{Nebenrechnungen}{%
			\\
			\textcolor{blue}{ 
				\hrulefill \\
				\emph{Nebenrechnung:}
			Sei $\frac{1}{p} = \frac{1}{2} + \frac{1}{j}$ (für $p \in (1,2)$). Dann ist
			\begin{align}
				\bra {\int (a^p g)g^{-1} + \int(b^p g)g^{-1}  }^\frac{1}{p} &\leq \bra { \norm {a^p g}_{L^\frac{2}{p}} \norm {g^{-1} }_{L^\frac{j}{p}} +\norm {b^p g}_{L^\frac{2}{p}} \norm {g^{-1} }_{L^\frac{j}{p}}      }^\frac{1}{p} \\
				&\leq \norm {g^{-1} }_{L^\frac{j}{p}}^\frac{1}{p} \bra {\norm {a^p g}_{L^\frac{2}{p}} +\norm {b^p g}_{L^\frac{2}{p}} }^\frac{1}{p} \\
				&\leq \bra { \int g^{-\frac{j}{p}}  }^\frac{1}{j} \bra {   \pra { \int a^2 g^\frac{2}{p}   }^\frac{p}{2} + \pra {  \int b^2 g^\frac{2}{p} }^\frac{p}{2}         }^\frac{1}{p} \\
				&= \norm {g^{-\frac{1}{p}}}_{L^j} \bra { \norm {a g^\frac{1}{p}}^p_{L^2} +\norm {b g^\frac{1}{p}}^p_{L^2}    }^\frac{1}{p} \\
				&\leq  \norm {g^{-\frac{1}{p}}}_{L^j}  2^\frac{2-p}{2p}     \bra { \norm {a g^\frac{1}{p}}^2_{L^2} + \norm {b g^\frac{1}{p}}^2_{L^2}    }^\frac{1}{2}
			\end{align}
			We used that (for $1=\frac{p}{2} + \frac{2-p}{2}$)
			\begin{align}
				(a^p+b^p)^\frac{1}{p} = \abs {(1,1) \cdot (a^p,b^p)}^\frac{1}{p} \leq (1^\frac{2-p}{2}+1^\frac{2-p}{2})^{\frac{2-p}{2} \frac{1}{p}} \cdot (a^2+b^2)^{\frac{p}{2} \frac{1}{p}} = 2^\frac{2-p}{2p} \sqrt{a^2+b^2}
			\end{align}
			In the end set $g:= \psi^\frac{p}{2}$.
				\hrulefill 
				\\	
			}
		}{%
		}

	where we again used Hölder's inequality with $\frac{1}{p} = \frac{1}{2} + \frac{1}{j}$ (recall that $p \in (1,2)$).
	Since $m<q$ for all $b>0$ arbitrary but fixed, it holds that for any $z \in \R$:
	\begin{align}
	\abs {z}^{m} \leq b \abs {z}^2 + b^\frac{m-q}{m-2} \abs {z}^q .
	\end{align} 
	Note that $\frac{m-q}{m-2}<0$. Using this estimate we get for arbitrary but fixed $b>0$:
	\begin{align}
		&\int \limits_{D_R^0} \eta(w) w^2 \psi \leq \gamma \norm {w}^2_{L^2(D_R^{-x_0},\psi)}  \tilde{\gamma}(\gamma,q) \psi(-x_0)^{1-\frac{q}{2}} C_1^q(\partial \Omega) \norm {w}^q_{H^1(D_R^{-x_0},\psi)}  \\
		&+ \gamma \norm {w}^2_{L^2(D_{-x_0^0},\psi)} + \tilde{\gamma}(\gamma,m) \norm {\psi}_{L^\infty(D_{-x_0}^0)}  C_2^m(\partial \Omega) \norm {\psi^{-\frac{1}{2}}}^m_{L^j(D_{-x_0}^0)}   2^\frac{(2-p)m}{2p} \\
		&\cdot \bra {b \norm {w}^2_{H^1(D_{-x_0}^0, \psi)} + b^\frac{m-q}{m-2} \norm {w}^q_{H^1(D_{-x_0}^0, \psi)}   } \\
		&\leq \bra {\gamma  + \tilde{\gamma}(\gamma,m) \norm {\psi}_{L^\infty(D_{-x_0}^0)}  C_2^m(\partial \Omega) \norm {\psi^{-\frac{1}{2}}}^m_{L^j(D_{-x_0}^0)}   2^\frac{(2-p)m}{2p} b } \norm {w}^2_{H^1(D_{R}^0, \psi)} \\
		&+ 2 \max \left \{ \tilde{\gamma}(\gamma,q) \psi(-x_0)^{1-\frac{q}{2}} C_1^q(\partial \Omega),    \right . \\
		&\quad \left .\tilde{\gamma}(\gamma,m) \norm {\psi}_{L^\infty(D_{-x_0}^0)}  C_2^m(\partial \Omega) \norm {\psi^{-\frac{1}{2}}}^m_{L^j(D_{-x_0}^0)}   2^\frac{(2-p)m}{2p} b^\frac{m-q}{m-2} \right \} \norm {w}^q_{H^1(D_{R}^0, \psi)} 
		.
	\end{align}
	In order to finish the proof we require
	\begin{enumerate}[label=\Roman*)]
		\item $\gamma \leq \frac{\alpha}{4}$
	     \item $\tilde{\gamma}(\gamma,m) \norm {\psi}_{L^\infty(D_{-x_0}^0)}  C_2^m(\partial \Omega) \norm {\psi^{-\frac{1}{2}}}^m_{L^j(D_{-x_0}^0)}   2^\frac{(2-p)m}{2p} b   \leq \frac{\alpha}{4}$
	     \item \label{crit_III} $ \max \left \{ \tilde{\gamma}(\gamma,q) \psi(-x_0)^{1-\frac{q}{2}} C_1^q(\partial \Omega), \right . \\ \left .
	     \tilde{\gamma}(\gamma,m) \norm {\psi}_{L^\infty(D_{-x_0}^0)}  C_2^m(\partial \Omega) \norm {\psi^{-\frac{1}{2}}}^m_{L^j(D_{-x_0}^0)}   2^\frac{(2-p)m}{2p} b^\frac{m-q}{m-2} \right \} \norm {w}^{q-2}_{H^1(D_{R}^0, \psi)} \\ \leq \frac{\alpha}{4} $
	\end{enumerate}
Let us now set 
\begin{align}
	\gamma &:= \frac{\alpha}{4} \\
     b &:=\frac{\alpha}{4} \bra {\tilde{\gamma}(\gamma,m) \norm {\psi}_{L^\infty(D_{-x_0}^0)}  C_2^m(\partial \Omega) \norm {\psi^{-\frac{1}{2}}}^m_{L^j(D_{-x_0}^0)}   2^\frac{(2-p)m}{2p}  }^{-1}
\end{align}
Let us note that
\begin{align}
	\psi(x_1,y) = \psi(-x_0) \exp \bra { \int \limits_{-x_0}^{x_1} -k_1(\zeta,y) \dx{\zeta}     }
\end{align}
and using that
\begin{align}
	\norm {\psi }_{L^\infty(D_{-x_0}^0)} &=  \psi(-x_0)   \norm {  \exp \bra {-  \int \limits_{-x_0}^{x_1} k_1(\zeta,y) \dx{\zeta} }  }_{L^\infty(D_{-x_0}^0)} \\
	\norm {\psi^{-\frac{1}{2}} }_{L^j(D_{-x_0}^0)} &= \psi^{-\frac{1}{2}}(-x_0)    \pra {\int \limits_{D_{-x_0}^0}     \exp \bra {\frac{j}{2}   \int \limits_{-x_0}^{x_1} k_1(\zeta,y) \dx{\zeta} } \dx{x_1} \dx{y}   }^\frac{1}{j}
\end{align}
Putting this into \ref{crit_III} we get
\begin{align}
	\norm {w}_{H^1(D_R^0,\psi)} &\leq \bra {\frac{\alpha}{2}}^\frac{1}{q-2} \psi^\frac{1}{2}(-x_0) \max \left \{   \tilde{\gamma} \bra { \frac{\alpha}{4},q   } C_1^q(\partial \Omega)  , \right . \\
	& \left . \left ( \tilde{\gamma} \bra {\frac{\alpha}{2} , m} C_2^m(\partial \Omega)  2^\frac{(2-q)m}{2p} \norm {  \exp \bra {-  \int \limits_{-x_0}^{x_1} k_1(\zeta,y) \dx{\zeta} }   }_{L^\infty(D_{-x_0}^0)}   \right . \right . \\
	& \left . \left .\pra {\int \limits_{D_{-x_0}^0}     \exp \bra {\frac{j}{2}   \int \limits_{-x_0}^{x_1} k_1(\zeta,y) \dx{\zeta} } \dx{x_1} \dx{y}   }^\frac{m}{j}  \right )^\frac{q-2}{m-2}  \right \}^{-\frac{1}{q-2}} := \delta.
\end{align}

\end{proof}

With Lemma \ref{lemma:auxiliary_lemma_fixed_R} being proved we are in the position to prove Proposition \ref{proposition:blocking:fixed_R}.

\begin{proof}[Proposition \ref{proposition:blocking:fixed_R}]
The strategy of this proof is to show that for all $w \in H^1_{0,1}(D_R^a, \psi )$ such that $\norm {w-w_0}_{H^1(D_R^a, \psi )} = \delta$ it holds that 
\begin{align}
	J_{D_R^a}(w) > J_{D_R^a}(w_0) .
\end{align}
Since $J_{D_R^a}$ is weakly lower semicontinuous, bounded below and coercive, $J_{D_R^a}$ has a local minimizer among all the functions $w \in H^1_{0,1}(D_R^a, \psi )$ such that $\norm {w-w_0}_{H^1(D_R^a,\psi )} \leq \delta$. 
And since we have a local minimizer that does not lie on the boundary of $\norm {w-w_0}_{H^1(D_R^a,\psi )} = \delta$ we derive an Euler-Lagrange-equation. 
Let us first note that from $\norm {w-w_0}_{H^1(D_R^a,\psi )} \leq \delta$ it follows that $\norm {w-w_0}_{H^1(D_R^0,\psi )} \leq \delta$. 

And to make use of Lemma \ref{lemma:auxiliary_lemma_fixed_R} we split the functional as follows. For any $w \in H^1_{0,1}(D_R^a,\psi )$ it holds that 
\begin{align}
	J_{D_R^a}(w) -J_{D_R^a}(w_0) = \underbrace {	J_{D_R^0}(w) -J_{D_R^0}(w_0) }_{:= I} + \underbrace {	J_{D_0^a}(w) -J_{D_0^a}(w_0)  }_{:=II}.
\end{align}
From Lemma \ref{lemma:auxiliary_lemma_fixed_R} it follows for the first term I 
\begin{align}
 J_{D_R^0}(w) -J_{D_R^0}(w_0) \geq \alpha \norm {w}^2_{H^1(D_R^0,\psi )} .
\end{align}
For the second term II we use the observation that there is $K>0$ such that for all $s \in \R$
\begin{align} \label{estimate:F}
F(s) \geq K(s-1)^2 .
\end{align}
With this observation we conclude that for any measurable set $A \subset D_R^a$ it holds that 
\begin{align}
	J_A(w) \geq \nu \norm {w-1}^2_{H^1(A,\psi )} ,
\end{align}
where $\nu := \min \set {K, \frac{1}{2}}$. Furthermore we estimate 
\begin{align}
	J_{D_0^a}(w_0) = \int \limits_{D_0^a} \bra {  \frac{1}{2} \bra {\frac{1}{a}}^2 +F(w_0)    } \psi \leq \underbrace { \bra { \frac{1}{2} \frac{1}{a} + a \max \limits_{s \in [0,1]} F(s)    } \abs {\Omega} }_{:= \beta} \psi(0)
\end{align}
Together with \eqref{estimate:F} we get 
\begin{align}
	J_{D_0^a}(w) - J_{D_0^a}(w_0) \geq \nu \norm {w-1}_{H^1(D_0^a,\psi )} -\beta \psi(0) .
\end{align}
In order to use the assumption that $\norm {w-w_0}_{H^1(D_R^a, \psi )} = \delta$ we estimate that
\begin{align}
	\nu \norm {w-1}^2_{H^1(D_0^a, \psi )} \geq \frac{\nu}{2} \norm {w-w_0}^2_{H^1(D_0^a,\psi )} - \nu \norm {w_0 -1}^2_{(D_0^a,\psi )}
\end{align}
using Young's inequality. Furthermore by a direct calculation
\begin{align}
	\norm {w_0-1}^2_{H^1(D_0^a,\psi )} = \int \limits_{D_0^a} \bra {  \bra { \frac{1}{a} }^2 + \bra {\frac{x_1}{a} -1    }^2     } \psi = \psi(0)  \underbrace {  \bra { \frac{1}{a} + \frac{1}{3} a   } \abs {\Omega} }_{=: \gamma} .
\end{align}
Putting these estimations together we get
\begin{align}
	J_{D_R^a}(w) -J_{D_R^a}(w_0) &\geq \frac{\nu}{2} \norm {w-w_0}^2_{H^1(D_0^a,\psi )} -\nu \psi(0) - \beta \psi(0) + \alpha \norm {w-w_0}^2_{H^1(D_R^0, \psi)} \\
	&\geq \min \set {\frac{\nu}{2} , \alpha}  \underbrace{  \norm {w-w_0}^2_{H^1(D_R^a,\psi )} }_{=\delta^2}- \nu \gamma \psi(0) - \beta \psi(0) .
\end{align}
So the proposition is proved if this is positive. This is the case if 
\begin{align}
	\eta \delta^2 - \bra {\nu \gamma + \beta} \psi(0) >0 .
\end{align}
Exploiting that 
\begin{align}
	\frac{\psi(0)}{\psi(-x_0)} = \exp \bra { - \int \limits_{-x_0}^{0} \fint \limits_\Omega k_1(x_1,y) \dx{x_1} \dx{y}       }
\end{align}
we arrive at the condition
\begin{align}
	&\frac{\eta}{\nu \gamma + \beta}  \bra { \frac{\alpha}{4} }^\frac{2}{q-2} >\exp \bra { - \int \limits_{-x_0}^{0} \fint \limits_\Omega k_1(x_1,y) \dx{x_1} \dx{y}       }
	\\ &\max \left \{ C_1^q(\partial \Omega) \tilde{\gamma} \bra { \frac{\alpha}{4},q } , \left [ \tilde{\gamma} \bra { \frac{\alpha}{4} ,m  }  C_2(\partial \Omega) 2^{\frac{(2-q)m}{2p}} \right .  \right . \\  
	&\left .  \left .    \norm { \exp \bra {- \int \limits_{-x_0}^{x_1} k_1(\zeta,y) \dx{\zeta}    }}_{L^\infty(D_{-x_0}^0)}  \bra {  \int \limits_{D_{-x_0}^0} \exp \bra { \frac{j}{2} \int \limits_{-x_0}^{x_1} k_1(\zeta,y) \dx{\zeta}      } \dx{x_1} \dx{y}    }^\frac{m}{j}           \right ]^\frac{q-2}{m-2}  \right    \}^{\frac{2}{q-2}} 
\end{align}
\end{proof}


From Proposition \ref{proposition:blocking:fixed_R} we get for any $R<-x_0-1$ existence of a local minimizer $w_R \in H^1_{0,1}(D_R^a, \psi )$ of the functional $J_{D_R^a}$ such that $\norm {w_R-w_0}_{H^1(D_R^a, \psi )} \leq \delta$. From this it follows that $w_R$ is a weak solution of 
\begin{align}
\begin{cases}
-\Delta w_R + k \cdot \nabla w_R = f(w_R) &\text{ in } D_R^a, \\
w_R(a,y) =1 &\text{ for almost all } y \in \Omega,\\
w_R(R,y) =0 &\text{ for almost all } y \in \Omega, \\
\frac{\partial w_R}{\partial \nu} = 0 &\text{ on } \partial D \cap D_R^a.
\end{cases} 
\end{align} 
Using the maximum principle we conclude that for all $R<-x_0-1: 0 \leq w_R \leq 1$ in $D_R^a$.
From this we construct a supersolution to 
\begin{align}
\begin{cases}
\partial_t u -\Delta u +k \cdot \nabla u= f(u) \text{ for all } (t,x) \in \R\times D \\
u(t,x) - \phi(x+ct) \rightarrow 0 \text{ as } t \rightarrow -\infty \text{ uniformly in } D
\end{cases} 
\end{align} 
by passing to the limit $R \rightarrow -\infty$ and extending by $1$ onto $D_a^\infty$.

\begin{prop}[Existence of a stationary supersolution] \label{proposition:existence_stationary_supersolution}
	Assume that condition \eqref{blocking_condition} holds and let $w_R$ be the local minimizer of the energy functional $J_{D_R^a}$ as in Proposition \ref{proposition:blocking:fixed_R}, then $(w_R)_{R<-x_0-1}$ converges up to a subsequence in $C^2_\text{loc}(D_{-\infty}^a)$ to a solution $w_\infty$ of
	\begin{align} \label{w_infty_equation}
	\begin{cases}
	- \Delta w_\infty+k \cdot \nabla w_\infty  = f(w_\infty) &\text{ in } D_{-\infty}^a, \\
	w(a,y)=1 &\text{ for almost all } y \in \Omega,
	\end{cases}
	\end{align}
	such that $w_\infty(x) \rightarrow 0$ as $x_1 \rightarrow -\infty$.
\end{prop}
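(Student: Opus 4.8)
The plan is to obtain $w_\infty$ as a $C^2_{\loc}$-limit of the family $(w_R)_{R<-x_0-1}$ by means of uniform elliptic estimates, and then to extract the decay at $-\infty$ from the fact, established in Proposition~\ref{proposition:blocking:fixed_R}, that the radius $\delta$ of the ball containing the minimizers is independent of $R$. First I would record the a priori bounds that are uniform in $R$. By the comparison principle each $w_R$ satisfies $0\le w_R\le 1$ in $D_R^a$, and since $k\in C_c$ and $f\in C^{1,1}$, the interior and boundary Schauder (or $W^{2,p}$) estimates applied to $-\Delta w_R + k\cdot\nabla w_R = f(w_R)$ yield, on every compact set $K\subset D_{-\infty}^a$, a bound on $\|w_R\|_{C^{2,\gamma}(K\cap\overline D)}$ that is independent of $R$ once $R$ is small enough that $K\subset D_R^a$; here the Neumann condition on the lateral boundary and the Dirichlet condition $w_R(a,\cdot)=1$ on the right face are controlled by the corresponding boundary estimates. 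A diagonal argument together with Arzel\`a--Ascoli then produces $w_\infty$ with $w_R\to w_\infty$ in $C^2_{\loc}(D_{-\infty}^a)$, and passing to the limit in the equation and the boundary data gives \eqref{w_infty_equation} together with $0\le w_\infty\le 1$ and $\partial_\nu w_\infty=0$ on the lateral boundary.

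The core of the proof is the decay $w_\infty(x)\to0$ as $x_1\to-\infty$, and this is where the $R$-independence of $\delta$ enters. Since $w_0\equiv0$ on $D_R^0$, the bound $\|w_R-w_0\|_{H^1(D_R^a,\psi)}\le\delta$ restricts to $\|w_R\|_{H^1(D_R^0,\psi)}\le\delta$ for every $R$. By weak lower semicontinuity of the norm along the subsequence --- equivalently, by Fatou applied to the $C^2_{\loc}$ convergence on the exhausting sets $D_{R'}^0$, $R'\to-\infty$ --- the limit inherits $\|w_\infty\|_{H^1(D_{-\infty}^0,\psi)}\le\delta$. Because $k$ is compactly supported, $\psi$ equals the constant $\psi(-x_0)$ on $D_{-\infty}^{-x_0}$ and is bounded below by a positive constant on the compact transition region $D_{-x_0}^0$; hence $\psi$ is bounded below on all of $D_{-\infty}^0$ and $w_\infty\in H^1(D_{-\infty}^0)$ in the unweighted sense, so in particular $\int_{D_{-\infty}^0}w_\infty^2\,dx<\infty$.

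Finally I would convert this integrability into uniform decay. For $x_1<-x_0$ the equation reduces to $-\Delta w_\infty=f(w_\infty)$ with $0\le w_\infty\le1$, so the interior and Neumann-boundary gradient estimates give a Lipschitz bound for $w_\infty$ on $D_{-\infty}^{-x_0-1}$ that is invariant under translation in the $x_1$-direction, the coefficients there being independent of $x_1$. Suppose $w_\infty$ did not tend to $0$: then there would exist $\eps_0>0$ and points $x^{(j)}$ with $x_1^{(j)}\to-\infty$ and $w_\infty(x^{(j)})\ge\eps_0$, and after passing to a subsequence I may assume the $x^{(j)}$ so far apart in $x_1$ that the balls $B_\rho(x^{(j)})$ are pairwise disjoint. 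The uniform Lipschitz bound then forces $w_\infty\ge\eps_0/2$ on $B_\rho(x^{(j)})\cap D$, and each of these sets has measure bounded below by a positive constant independent of $j$, whence $\int_{D_{-\infty}^0}w_\infty^2\,dx=\infty$, contradicting the previous paragraph. Therefore $w_\infty(x)\to0$ as $x_1\to-\infty$, which completes the proof.

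The step I expect to be the main obstacle is precisely this last passage. The finite weighted energy $\|w_\infty\|_{H^1(D_{-\infty}^0,\psi)}\le\delta$ only tells us directly that the $L^2$-mass of the cross-sectional slices is integrable along $x_1$, which by itself gives decay merely along a subsequence of slices; upgrading this to genuine uniform pointwise decay requires the $x_1$-translation-invariant gradient estimate, and it is exactly the compact support of $k$ (so that the equation is autonomous far to the left) that makes this estimate available.
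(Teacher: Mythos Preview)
Your proposal is correct and follows essentially the same route as the paper: uniform bounds $0\le w_R\le1$ plus Schauder estimates give $C^2_{\loc}$ compactness, Fatou passes the $R$-independent bound $\norm{w_R-w_0}_{H^1(\cdot,\psi)}\le\delta$ to the limit, and the decay at $-\infty$ is then obtained by the same contradiction argument (gradient bound forces $w_\infty\ge\eps_0/2$ on infinitely many disjoint balls, contradicting the finite weighted $L^2$-mass). Your write-up is in fact a bit more careful than the paper's in spelling out the translation invariance of the equation for $x_1<-x_0$ as the source of the uniform Lipschitz bound.
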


\begin{proof}
	As $0\leq w_R \leq 1$ for all $R>-x_0-1$ and using Schauder estimates there exists a subsequence $(R_n)_{n \in \N}$ with $R_n \searrow -\infty$ as $n \rightarrow \infty$ such that $w_{R_n} \rightarrow w_\infty$ in $C^2_\text{loc}(D_{-\infty}^a)$ as $n \rightarrow \infty$.
	It remains to prove that the limit $w_\infty$ satisfies $w_\infty \rightarrow 0$ as $x_1 \rightarrow -\infty$. By Fatou's Lemma we find
	\begin{align} \label{Fatou_w_infty_w_0_bound}
	\norm {w_0-w_\infty}^2_{H^1(D_{-\infty}^a, \psi )} \leq \liminf \limits_{n \rightarrow \infty} \norm {(w_0-w_{R_n})  \chi_{\set {R_n<x_1<a}}   }^2_{H^1(D_{-\infty}^a, \psi )} \leq \delta^2.
	\end{align}
	Then arguing by contradiction, we assume that there exists $\eta >0$ and a sequence $(x_n)_{n \in \N}$, such that $(x_n)_1 \rightarrow -\infty$ as $n \rightarrow \infty$ and $w_\infty(x_n) >\eta$ for all $n \in \N$. Since $w_\infty \in C^2_\text{loc}$ and $w_\infty$ is a bounded solution of \eqref{w_infty_equation}, by standard parabolic estimates we know that $\abs { \nabla w_\infty} \leq C$ for some constant $C>0$. It follows that for all $x \in B_\frac{\eta}{2C}(x_n) $
	\begin{align}
	\abs {w_\infty(x) - w_\infty(x_n)} \leq \max \limits_{x \in B_\frac{\eta}{2C}(x_n) } \abs {\nabla w_\infty} \abs {x-x_n}
	\end{align}
	Hence $w_\infty(x) \geq \frac{\eta}{2}$ for all $x \in B_\frac{\eta}{2C}(x_n)$ and all $n \in \N$. This yields that
	\begin{align}
	\norm {w_\infty -w_0}^2_{L^2(D_{-\infty}^a, \psi )} \geq \psi(-x_0) \sum \limits_{I}^\infty \bra {\frac{\eta}{2} }^2 \frac{\eta}{C} = \infty,
	\end{align}
	where $I:= \set {i \in \N : \abs {x_i -x_j}  > \frac{\eta}{2C} \text{ for all } \N \setminus \set {i} \text{ and } (x_i)_1 <-x_0   }$ and obviously $\#I = \infty$. But this is a contradiction to \eqref{Fatou_w_infty_w_0_bound} and thereby the Proposition is proved.
\end{proof}

The proof of Theorem \ref{theorem:blocking} is now nothing but applying Proposition \ref{proposition:existence_stationary_supersolution} and a comparison principle.

\begin{proof}[Proof of Theorem \ref{theorem:blocking}]

	Let us now take $w_\infty$ as in Proposition \ref{proposition:existence_stationary_supersolution} and let us  extend $w_\infty$ by $1$ onto all of $\R$. We set
	\begin{align}
	\tilde{w}_\infty(x) := 
	\begin{cases}
	w_\infty(x) &, \text{ if } x_1 \leq a \\
	1 &, \text{ else}.
	\end{cases}
	\end{align}
	Thus $\tilde{w}_\infty(x) $ is a supersolution of the parabolic problem 
	\begin{align}
	\partial_t u - \Delta u + k(x)  \cdot \nabla  u  = f(u) &\text{ for } (t,x) \in \R \times D ,\\
	\frac{\partial u}{\partial \nu} = 0 &\text{ on } \R \times \partial D.
	\end{align}
	Furthermore it holds that
	\begin{align}
	\lim \limits_{t \rightarrow -\infty} \inf \limits_{x \in D}  \bra {  \tilde{w}_\infty(x) - \phi(x_1+ct) } \geq 0.
	\end{align}
	Indeed 
	\begin{align}
	&\text{ for } x_1 \geq a, \text{ for all } t \in \R:  \tilde{w}_\infty(x) - \phi(x_1+ct) = 1- \phi(x_1+ct) \geq 0 \\
	&\text{ for } x_1 < a, \text{ for all } t \in \R:  \tilde{w}_\infty(x) - \phi(x_1+ct) \geq \tilde{w}_\infty(x) - \phi(a+ct) \rightarrow \tilde{w}_\infty(x) \geq 0 \\
	&\qquad \text{ uniformly in } D_{-\infty}^a \text{ as } t \rightarrow -\infty.
	\end{align}
	Using the generalized maximum principle (Lemma 3.2 in \cite{front_blocking}), we conclude that
	\begin{align}
	u(t,x) \leq \tilde{w}_\infty(x) \text{ for all } t \in \R \text{ and } x \in D.
	\end{align} 
	Hence the stationary supersolution $\tilde{w}_\infty$ blocks the invasion of the stationary state $1$ into the right.
	
\end{proof}



\nomenclature{$\mathcal{L}^N$}{The $N$-dimensional Lebesgue measure}%
\nomenclature{$B_R(x)$}{The open ball of radius $R$ centred in $x$}%
\nomenclature{$C_R(x)$}{The open cube of side length $R$ centred in $x$}%
\nomenclature{$\omega_N$}{The $N$-dimensional Lebesgue-measure of the $N$-dimensional unit ball}%

%
%

\bibliographystyle{abbrv}
\bibliography{Change_of_speed-3.bib}

\end{document}